\def\I{\mathbb{I}}
\def\R{\mathbb{R}}
\def\T{\mathbb{T}}
\def\X{\mathbb{X}}
\def\Y{\mathbb{Y}}
\def\Z{\mathbb{Z}}
\def\cD{\mathcal{D}}
\def\cE{\mathcal{E}}
\def\al{\alpha}
\def\de{\delta}
\def\si{\sigma}
\def\om{\omega}
\def\Ga{\Gamma}
\def\De{\Delta}
\def\La{\Lambda}
\def\Si{\Sigma}
\def\Om{\Omega}
\def\Up{\Upsilon}
\def\P{\mathrm{P}}
\DeclareMathOperator\erf{erf}
\newcommand{\rpl}                         % +) or <+
{\mbox{$
\begin{picture}(12.7,8)(-.5,-1)
\put(0,0.2){$+$}
\put(4.2,2.8){\oval(8,8)[r]}
\end{picture}$}}
\newtheorem{theorem}{Theorem}[section]
\newtheorem{lemma}[theorem]{Lemma}
\newtheorem{proposition}[theorem]{Proposition}
\theoremstyle{remark}
\newtheorem*{remark*}{\rm\bf Remark}
\newtheorem{definition}[theorem]{\bf Definition}
\newtheorem{example}[theorem]{\bf Example}
\newcommand{\nd}{\nabla}
\newcommand{\semidownbracefill}{$\m@th\braceld\leaders\vrule\hfill\braceru
  \bracelu\leaders\vrule\hfill\arrowhead$}
\def\sideremark#1{\ifvmode\leavevmode\fi\vadjust{\vbox to0pt{\vss% the remark
 \hbox to 0pt{\hskip\hsize\hskip1em%                          will appear only
 \vbox{\hsize3cm\tiny\raggedright\pretolerance10000%          on the side
 \noindent #1\hfill}\hss}\vbox to8pt{\vfil}\vss}}}%
   \subjclass[2010]{Primary 53A30, 53B15; Secondary 35N10}
   \keywords{Conformal differential geometry, Tractor bundles, Einstein's equations}
\author{Matthew Randall}
 \address{Leibniz Universit\"at Hannover, Institut f\"ur Differentialgeometrie, Welfengarten 1, D-30167 Hannover}
\email{matthew.randall@math.uni-hannover.de}
\title{The conformal-to-Einstein equation on M\"obius surfaces}
\begin{document}

\begin{abstract}
 On a conformal manifold, it is well known that parallel sections of the standard tractor bundle with non-vanishing scale are in 1-1 correspondence with solutions of the conformal Einstein equation. In $2$ dimensions conformal geometry carries no local information but one can remedy this by equipping the surface with a M\"obius structure. The conformal Einstein equation is then a well defined overdetermined system of linear PDEs on the M\"obius surface. It turns out however that an additional term involving curvature in the form of the Cotton-York tensor appears in the prolongation of the conformal Einstein equation in $2$-dimensions, in contrast to the higher dimensional setting. We establish a 1-1 correspondence between solutions of the conformal Einstein equation on M\"obius surfaces with non-vanishing scale and parallel sections of the modified standard tractor connection called the prolongation connection. We also discuss the consequence of this in deriving local obstructions, and also obtain (partial) results on the possible dimensions of the kernel space of the conformal-to-Einstein operator on M\"obius surfaces.    

\end{abstract}

\maketitle

\pagestyle{myheadings}
\markboth{Randall}{The conformal-to-Einstein equation on M\"obius surfaces}

\section{Introduction}
Let $(M^2,[g])$ be a Riemann surface. This is a smooth $2$-dimensional oriented manifold equipped with a conformal structure $[g]$, which is an equivalence class of smooth Riemannian metrics under the equivalence relation $g_{ab}\mapsto \widehat g_{ab}=\Om^2 g_{ab}$ for any smooth positive nowhere vanishing function $\Om$. Since every metric $g_{ab}$ in dimension $2$ is locally a conformal rescaling of the flat metric $\de_{ab}$, conformal geometry in dimension $2$ carries no local information. To remedy this, one can impose on Riemann surfaces additional local structure present in conformal manifolds of dimension $n>2$. This is the motivation behind M\"obius structures \cite{mob}. A Riemann surface with a M\"obius structure will henceforth be called a M\"obius surface. On M\"obius surfaces the conformal-to-Einstein equation makes sense, and can be seen as a specialisation of the equation in conformal geometry in higher dimensions to the $2$-dimensional setting. The conformal-to-Einstein equation on conformal manifolds has been studied extensively, for instance in \cite{AlmObst} and \cite{GoNur}. Local algebraic obstructions to the existence of conformally Einstein metrics have also been found, for example in dimension $4$ the Bach tensor is one such obstruction. In higher dimensions these local obstructions are studied in \cite{GoNur}. Also other obstructions are found for self-dual manifolds in \cite{twist} and a complete set of obstructions is found recently in \cite{duntod}. In this paper we derive algebraic constraints for a given non-flat M\"obius surface to admit a solution to the conformal-to-Einstein equation, and from there derive obstructions to existence of solutions in the non-flat setting. Checking that the obstructions do not vanish tells us definitively that the M\"obius surface cannot admit any conformally Einstein metrics locally. We first recall facts about M\"obius surfaces in Section \ref{mobius}. In Section \ref{MEW} we discuss how the conformal Einstein equation on M\"obius surfaces is related to the scalar-flat M\"obius Einstein-Weyl (sf-MEW) equation, studied by the same author in \cite{sfmew}. In Section \ref{ceprolong} we prolong the conformal-to-Einstein equation on M\"obius surfaces to form a closed system and proceed to derive local obstructions in Sections \ref{ceobst} and \ref{nongeneric}. The relationship between the conformal-to-Einstein equation and tractor calculus is explored in Section \ref{tractorcal}. Examples are given in Sections \ref{ceobst} and \ref{nongeneric}. We give partial results on the dimension of the kernel space of the conformal-to-Einstein operator in Section \ref{dimker} and conclude in Section \ref{outlook} with an outlook on future research directions. Parts of this paper also appear in the author's thesis \cite{thesis}. Abstract indices \cite{pr} will be used throughout the paper to describe tensors on the conformal manifold. We have already used $g_{ab}$ to denote the metric tensor. For another instance, if we write $\om_{a}$ to denote a smooth $1$-form $\om$, then the $2$-form $d\om$ can be written as $\nd_{[a}\om_{b]}=\frac{1}{2}(\nd_a\om_b-\nd_b\om_a)$. For a representative metric $g_{ab}$ of the conformal class $[g]$, let $\nd$ denote its associated Levi-Civita connection. Under a conformal rescaling of the metric $\widehat g_{ab}=\Om^2 g_{ab}$, we have
\[
\widehat \nd_a\om_b=\nd_a\om_b-\Up_a\om_b-\Up_b\om_a+g_{ab}\Up^c\om_c
\]
for $\Up_a=\nd_a\log \Om$, where $\widehat \nd$ is the Levi-Civita connection for the conformally rescaled metric $\widehat g_{ab}$.

\section{M\"obius surfaces}\label{mobius}
%We review the relationship between the definitions of M\"obius surfaces in \cite{mob}, \cite{mobreview} and \cite{sfmew}. 
A M\"obius surface is a Riemann surface $(M^2,[g])$ equipped with a smooth M\"obius structure as defined in $2.1$ of \cite{mob}. Following the conventions in \cite{mob}, in $2$ dimensions, the line bundle of conformal densities of weight $-2$, denoted by $\cE[-2]$, is defined to be the bundle of volume forms $\La^2TM$. The density line bundle of weight $1$ denoted by $\cE[1]$ (where it is called $L^1$ in \cite{mob}), is then given by the root bundle $(\La^2TM)^{-\frac{1}{2}}$. A Weyl derivative $D_a$ is defined to be a covariant derivative on $\cE[1]$. It is shown in \cite{mob} that the induced conformal change in the trace-free part of the Hessian defined by a Weyl derivative on densities of weight $1$ is tensorial. Taking the Weyl derivative to $D_a$ to be the one induced by a Levi-Civita connection $\nd_a$ for a particular representative metric $g_{ab}$ in the conformal class $[g]$, this motivates the following definition: 
\begin{definition}{(\cite{mob})}
A M\"obius structure on $(M^2,[g])$ is a smooth second order linear differential operator $D_{ab}: \cE[1] \to \cE_{(ab)_\circ}[1]$ such that $D_{ab}-\nd_{(a}\nd_{b)_\circ}$ is a zero order operator. 
\end{definition}

Let $\P_{(ab)_\circ}$ be the symmetric trace-free tensor denoting the difference, i.e.\
\begin{align*}
\P_{(ab)_\circ}\si:=(D_{ab}-\nd_{(a}\nd_{b)_\circ})\si,
\end{align*}  
where $\si$ is a section of $\cE[1]$. Since the operator $D_{ab}=D_{(ab)_\circ}$ is invariantly defined, under a conformal rescaling of the metric $\widehat g_{ab}=\Om^2g_{ab}$ we find that 
\begin{align*}
\widehat \P_{(ab)_\circ}=\P_{(ab)_\circ}-\nd_a\Up_b+\Up_a\Up_b-\frac{1}{2}g_{ab}\Up_c\Up^c+\frac{1}{2}g_{ab}\nd_c\Up^c 
\end{align*} 
where $\Up_a=\nd_a\log \Om$. Let $K$ denote the Gauss curvature of $g_{ab}$, i.e.\ $K=\frac{R}{2}$, where $R$ is the scalar curvature of $g_{ab}$. Define the Rho tensor by $\P_{ab}:=\P_{(ab)_\circ}+\frac{K}{2}g_{ab}$. Under a conformal rescaling, $K$ transforms as $\widehat K=K-\nd_a\Up^a$ and therefore 
\begin{align*}
\widehat \P_{ab}=\P_{ab}-\nd_a\Up_b+\Up_a\Up_b-\frac{1}{2}g_{ab}\Up_c\Up^c.
\end{align*} 
Hence for any representative metric $g_{ab}$ in the conformal class $[g]$ with its associated Levi-Civita connection $\nd_a$, a M\"obius structure in the sense of \cite{mob} determines a symmetric tensor $\P_{ab}$ satisfying the following two properties: \\
1) The metric trace of $\P_{ab}$ is the Gauss curvature $K$ of $g_{ab}$; \\
2) Under a conformal rescaling of the metric $\hat g_{ab}=\Om^2g_{ab}$, the tensor $\P_{ab}$ transforms accordingly as
\begin{equation}\label{Rho}
\widehat \P_{ab}=\P_{ab}-\nd_a\Up_b+\Up_a\Up_b-\frac{1}{2}g_{ab}\Up_c\Up^c,
\end{equation}     
where $\Up_a=\nd_a\log \Om$. Hence one may equivalently define:
\begin{definition}{(\cite{mobreview} and \cite{sfmew})} A M\"obius structure $[\P]$ on $(M^2,[g])$ is the assignment of a smooth symmetric tensor $\P_{ab}$ representative of $[\P]$ to each metric $g_{ab}$ in the conformal class, such that $g^{ab}\P_{ab}=K$ and the conformal transformation law (\ref{Rho}) is satisfied. 
\end{definition}
A M\"obius surface will be denoted by $(M^2,[g],[\P])$, and note that a representative in the class $[\P]$ is dependent on a representative of the conformal class $[g]$.  
Unlike in higher dimensions, the Schouten tensor obtained from the Riemann curvature tensor is not well-defined in dimension 2 and a M\"obius structure remedies that by equipping the manifold with a Rho tensor $\P_{ab}$ that behaves like a Schouten tensor under conformal rescaling. %In $2$ dimensions, a Rho tensor $\P_{ab}$ 
This allows us to write 
\begin{equation}\label{curvid}
R_{abcd}=K(g_{ac}g_{bd}-g_{bc}g_{ad})\equiv \P_{ac}g_{bd}-\P_{bc}g_{ad}+\P_{bd}g_{ac}-\P_{ad}g_{bc},
\end{equation}
even though the tensor $\P_{ab}$ cannot be recovered from the Riemannian curvature tensor alone, in contrast to the higher dimensional setting. A fixed representative metric $g_{ab}$ from the conformal class $[g]$ can be viewed as having conformal weight $2$ and induces a volume form $\epsilon_{ab}=\epsilon_{[ab]}$ of conformal weight $2$. We set our convention so that $\epsilon^{ab}\epsilon_{cb}=\de_c{}^a$ and we raise and lower indices using the metric. The Cotton-York tensor given by
\[
Y_{abc}=\nd_a\P_{bc}-\nd_b\P_{ac}
\]
is a M\"obius invariant of the M\"obius structure. This means that under conformal rescalings of the metric, the quantity $Y_{abc}$ remains unchanged. 
We can use the volume form $\epsilon_{ab}$ to dualise, so that
\[
Y_{abc}=\frac{1}{2}\epsilon_{ab}Y_c, 
\]
where $Y_c=\epsilon^{ab}Y_{abc}$
is now a $1$-form of conformal weight $-2$. 
Observe that
\[
\frac{1}{2}\epsilon_{ab}Y^b=Y_{ab}{}^b=\nd_aK-\nd^b\P_{ab}.
\]
A M\"obius surface is called flat iff $Y_a=0$ and not flat otherwise. In \cite{mob} the vanishing of $Y_a$ is shown to be equivalent to integrability of the M\"obius structure.  

\section{Conformal-to-Einstein equation on M\"obius surfaces}\label{MEW}
In this section we derive the conformal-to-Einstein equation on M\"obius surfaces. Let $(M^2,[g],[\P])$ be a M\"obius surface. 
A Weyl connection $D_a$ on a Riemann surface $(M^2,[g])$ is a torsion-free connection that preserves the conformal class $[g]$, or equivalently $D_a g_{bc}=2\al_ag_{bc}$ for some $1$-form $\al_a$. The $1$-form $\al_a$ is determined up to a gauge freedom; under a conformal rescaling of the metric $g_{ab}\mapsto \widehat g_{ab}=\Om^2 g_{ab}$, we have $\al_a \mapsto \widehat \al_a=\al_a+\Up_a$, where again $\Up_a=\nd_a\log \Om$. A Weyl connection $D_a$ is called closed if the $1$-form $\al_a$ determined by the Weyl derivative is closed, i.e\ $\nd_{[a}\al_{b]}=F_{ab}=0$. In this case the Weyl connection is locally a Levi-Civita connection for a metric in $[g]$. For a fixed M\"obius surface $(M^2,[g],[\P])$, we can ask whether there is a compatible Weyl connection $D_a$ such that the second order linear differential operator $D_{ab}$ is given by the trace-free symmetric Hessian of the Weyl derivative, i.e.\ $D_{ab}=D_{(a}D_{b)_\circ}$. For a representative metric $g_{ab} \in [g]$ and its associated Levi-Civita connection $\nd_a$, this is equivalent to solving the system of equations given by
\begin{equation}\label{mew}
\mbox{Trace-free part of }(\nd_{(a}\al_{b)}+\al_a\al_b+\P_{ab})=0.
\end{equation}
The conformal-to-Einstein equation is obtained from (\ref{mew}) by imposing the additional condition that $\al_a$ is closed (equivalently we want a compatible Weyl connection $D_a$ that is closed). Choosing a representative metric $g_{ab}$ in the conformal class $[g]$ with its associated Levi-Civita connection $\nd_a$, if $F_{ab}=\nd_{[a}\al_{b]}=0$ we can write the conformal-to-Einstein equation as 
\begin{align}\label{mewf=0} 
\nd_a\al_b+\al_a\al_b+\P_{ab}-\frac{\al_c\al^c}{2}g_{ab}=-\frac{s}{2}g_{ab},
\end{align}
where $s$ here is the scalar curvature of the Weyl connection $D_a$. Equation (\ref{mew}) is introduced in \cite{thesis} (where it is called the M\"obius Einstein-Weyl (MEW) equation) and mentioned in \cite{sfmew}. It is found to be not finite type in the sense of \cite{spencer} but becomes finite type when we additionally impose $F=\epsilon^{ab}F_{ab}=0$, in which case we obtain the conformal-to-Einstein equation, or when we additionally impose $s=0$, which is studied in \cite{sfmew} as the sf-MEW equation. Equation (\ref{mew}) generalises the conformal-to-Einstein equation on M\"obius surfaces the same way that the Einstein-Weyl equation generalises the conformal-to-Einstein equation in higher dimensions. Together with \cite{sfmew}, this paper examines the cases where equation (\ref{mew}) specialises to finite type system of PDEs on M\"obius surfaces. In summary, we have a study of the various different types of overdetermined systems of PDEs on M\"obius surfaces: 
\pagebreak
\begin{table}[h!]
  \centering
  \begin{tabular}{|c|c|c|}
    \cline{2-3}
    \multicolumn{1}{c|}{} & $F=0$ & $F\neq 0$ \\ \hline
    $s=0$ &  \mbox{Conformal-to-Einstein on flat surfaces}  & \mbox{sf-MEW}    \\ \hline
    $s\neq 0$ & \mbox{Conformal-to-Einstein}   & \mbox{MEW (not finite type)}     \\ \hline
  \end{tabular} 
\end{table}\\
The conformal-to-Einstein equation on flat surfaces (both $F=0$ and $s=0$ case) is dealt with in Subsection \ref{flatsol}. A more direct and heuristic definition for the conformal-to-Einstein condition is to ask for a fixed representative Rho tensor $\P_{ab}$ of the M\"obius structure whether it is Einstein in the sense that it is some multiple of a representative metric $g_{ab}$ in the conformal class $[g]$. The conformal-to-Einstein equation can be then similarly derived this way. 

\section{Prolongation of the conformal-to-Einstein equation on M\"obius surfaces}\label{ceprolong}
In this section we apply the prolongation procedure to the linear system of PDEs associated to the conformal-to-Einstein condition on M\"obius surfaces and see that the closed system differs from that obtained in the higher dimensional setting. Prolongation of the conformal-to-Einstein equation in higher dimensions is well known (see \cite{BEGo} and \cite{GoNur}). The closed system we obtain has consequences in deriving algebraic obstructions for the existence of conformally Einstein metrics on general non-flat M\"obius surfaces.
Let $(M^2,[g],[\P])$ be a M\"obius surface. The conformal-to-Einstein equation makes sense, and is given by
\begin{align}\label{ce0} 
(\nd_{a}\nd_{b}+\P_{ab})_{\circ}\si=0,
\end{align}
where $(\ldots)_\circ$ denotes trace-free part. This is obtained from equation (\ref{mewf=0}) by substituting $\al_a=\frac{\nd_a\si}{\si}$. Rewriting (\ref{ce0}) by introducing the trace term $\La$, we obtain
\begin{align}\label{ce}
\nd_a\nd_b\si+\P_{ab}\si+\La g_{ab}=0.
\end{align}
Equation (\ref{ce}) is conformally invariant when $\si$ has conformal weight $1$. Alternatively we can view solutions $\si$ satisfying (\ref{ce0}) as lying in the kernel of the conformal-to-Einstein operator $\cD_{ab}: \cE[1] \mapsto \cE_{(ab)_\circ}[1]$ given by
\begin{equation}\label{ceop}
\cD_{ab}\si=\mbox{Trace-free part of }(\nd_a\nd_b+\P_{ab})\si. 
\end{equation}
We shall prolong the conformal-to-Einstein equation and find remarkably that it differs from the prolongation in higher dimensions with the addition of a curvature term!
Let $\mu_a=\nd_a\si$. Then (\ref{ce}) is
\begin{equation}\label{cen}
\nd_a\mu_b=-\P_{ab}\si-\La g_{ab}. 
\end{equation}
Differentiating we obtain
\[
\nd_c\nd_a\mu_b+(\nd_c\P_{ab})\si+\P_{ab}\mu_c+\nd_c\La g_{ab}=0,
\]
so that skewing $a$ and $c$ indices, 
\begin{align*}
R_{ac}{}^d{}_b\mu_d+Y_{cab}\si+\P_{ab}\mu_c-\P_{cb}\mu_a+\nd_c\La g_{ab}-\nd_a\La g_{cb}=0.
\end{align*} 
Using that on M\"obius surfaces the curvature decomposes according to (\ref{curvid}), we obtain
\begin{align*}
\P_a{}^d\mu_dg_{cb}-\P_c{}^d\mu_dg_{ab}+Y_{cab}\si+\nd_c\La g_{ab}-\nd_a\La g_{cb}=0.
\end{align*} 
Tracing $c$ and $b$ indices, we get 
\begin{align*}
\nd_a\La=-Y_{ac}{}^c\si+\P_a{}^d\mu_d.
\end{align*} 
Now in dimensions $n>2$, $Y_{ac}{}^c=0$ because the Cotton-York tensor is totally trace-free. However, in $2$ dimensions,
\[
Y_{abc}=\frac{1}{2}\epsilon_{ab}Y_c,
\]  
where $Y_c=\epsilon^{ab}Y_{abc}$, so that we have
\[
Y_{ac}{}^c=\frac{1}{2}\epsilon_{ac}Y^c=\frac{1}{2}U_a, 
\]
where $U_a=\epsilon_{ac}Y^c$ is introduced and used in \cite{sfmew}. Hence we obtain
\begin{align}\label{ce3}
\nd_a\La=-\frac{1}{2}U_a\si+\P_a{}^d\mu_d.
\end{align} 
The conformal-to-Einstein equation prolongs to form a closed system, which defines a connection on the tractor bundle associated to a M\"obius surface. We remark that the prolongation of this equation also appears in some earlier form in (4.6) of \cite{OS}.

\section{Tractor calculus on M\"obius surfaces}\label{tractorcal}
In this section we shall discuss how to construct the conformal standard tractor bundle $\cE_A$ on M\"obius surfaces. The exposition on the tractor bundle in conformal geometry can be found in \cite{BEGo} and \cite{GoNur}. Because of the way we defined M\"obius surfaces, the conformal class $[g]$ on $M^2$ is assumed to be Riemannian and so of signature $(2,0)$. However, the tractor construction in this section essentially goes through without change for conformal classes with metrics of signature $(1,1)$ or $(0,2)$. Also the construction of the bundle only involves the conformal structure $[g]$ and the M\"obius structure $[\P]$ enters through defining the tractor connection. 
We use the tractor bundle here to really mean its dual, the co-tractor bundle, under the identification via the tractor metric $h_{AB}$. This essentially follows the construction given in \cite{BEGo}. The jet exact sequence at the $2$-jets of the density line bundle $\cE[1]$ gives
\begin{equation}\label{seq2jet}
0\to \cE_{(ab)}[1]\to J^2(\cE[1])\to J^1(\cE[1])\to 0,
\end{equation}
and the conformal structure further decomposes $\cE_{(ab)}[1]$ into the direct sum $\cE_{(ab)_\circ}[1]\oplus\cE[-1]$. The symmetric trace-free bundle $\cE_{(ab)_\circ}[1]$ is a smooth subbundle of $J^2(\cE[1])$, and the tractor bundle $\cE_A$ is simply the quotient bundle of $J^2(\cE[1])$ by $\cE_{(ab)_{\circ}}[1]$, defined by the exact sequence
\begin{equation}\label{ctrdef}
0\to \cE_{(ab)_\circ}[1]\to J^2(\cE[1])\to \cE_A\to 0.
\end{equation}
The short exact sequence (\ref{seq2jet}) at the 2-jets level and the short exact sequence
\[
0\to \cE_{a}[1]\to J^1(\cE[1])\to \cE[1]\to 0
\]
at the 1-jet level determine a composition series for $\cE_A$ described by 
\[
\cE_A= \cE[-1]\rpl\cE_a[1]\rpl\cE[1]
\]
where $\rpl$ is the semi-direct sum. A choice of metric $g_{ab} \in [g]$ determines a splitting of the exact sequence, and identifies the standard tractor bundle $\cE_A$ with the direct sum $\cE[-1]\oplus \cE_a[1]\oplus\cE[1]$. On M\"obius surfaces, the conformal cotractor bundle has an invariant metric $h_{AB}$ of signature $(2,1)$ called the tractor metric, and an invariant connection $\nabla_a$ preserving $h_{AB}$ called the tractor connection. Moreover, we have a section of the standard tractor bundle $\T_A\in \Ga\cE_A$ given in a conformal scale obtained by choosing a representative metric $g \in [g]$ by
\begin{align*} 
\T_A\stackrel{g}=\begin{pmatrix}
\si \\ \mu_b \\ \La
\end{pmatrix}\in \begin{matrix}
\cE[1]\\ \oplus \\
\cE_a[1]\\ \oplus\\
\cE[-1]
\end{matrix},
\end{align*}
and under conformal rescalings of the metric, 
\begin{align*} 
\widehat \T_A\stackrel{\widehat g}=\begin{pmatrix}
\si \\ \mu_b+\Up_b\si \\ \La-\Up^b\mu_b-\frac{1}{2}\Up_b\Up^b\si
\end{pmatrix}.
\end{align*}
Using the tractor bases convention as used in \cite{GoNur} for instance, we have 
\begin{align*}
\T_A\stackrel{g}{=}\si \Y_A+\mu_a \Z^a_A+\La \X_A,  
\end{align*}
where under conformal recalings, the bases transform according to 
\begin{align*}
\widehat \Y_A=\Y_A-\Up_a\Z^a_A-\frac{1}{2}\Up_a\Up^a\X_A,\qquad \widehat \Z^a_A=\Z^a_A+\Up^a\X_A,\qquad \widehat \X_A=\X_A,
\end{align*} 
and the tractor connection acts on the bases according to 
\begin{align*}
\nd_a \Y_A=\P_{ab}\Z^b_A,\qquad \nd_a \Z_{bA}=-\P_{ab}\X_A-g_{ab}\Y_A,\qquad \nd_a \X_A=\Z_{aA}.
\end{align*}
Note that there is a choice in defining the tractor connection but this is motivated by the higher dimensional setting.  
On a M\"obius surface $(M^2,[g],[\P])$, we have constructed the standard tractor connection given by 
\begin{align*}
\nd_a\T_B=
\nd_a\begin{pmatrix}
\si \\ \mu_b \\ \La
\end{pmatrix}
=\begin{pmatrix}
\nd_a\si-\mu_a \\ \nd_a\mu_b+\P_{ab}\si+\La g_{ab} \\ \nd_a\La-\P_{ad}\mu^d
\end{pmatrix},
\end{align*}
just like the higher dimensional case. Recall that in dimensions $n>2$, there is a 1-1 correspondence between solutions of the conformally Einstein equation and parallel sections of the standard tractor bundle with nowhere vanishing scale $\si$ (see for instance \cite{AlmObst}). We would like to establish a similar correspondence on M\"obius surfaces. However, we have seen from the prolongation of the conformally Einstein equation (\ref{ce}) that an additional term involving $U_a$ appears in the closed system, so the standard tractor connection is not the right object in the correspondence. We have however in the flat case, where $U_a=0$, that  
\begin{proposition}\label{meflat}
There is a 1-1 correspondence between solutions of the conformal Einstein equation on flat M\"obius surfaces and parallel sections $\I_A$ of the standard tractor bundle with $\si=\X^A\I_A$ non-vanishing. 
\end{proposition}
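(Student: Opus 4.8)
The plan is to establish the correspondence by matching parallel sections of the standard tractor connection directly against the prolonged system of the conformal-to-Einstein equation, specialised to the flat case $U_a = 0$. Recall that the standard tractor connection acting on $\T_B = (\si, \mu_b, \La)^{\mathrm{t}}$ was computed above to be
\begin{align*}
\nd_a\T_B=\begin{pmatrix}
\nd_a\si-\mu_a \\ \nd_a\mu_b+\P_{ab}\si+\La g_{ab} \\ \nd_a\La-\P_{ad}\mu^d
\end{pmatrix}.
\end{align*}
Setting this to zero slot-by-slot yields exactly three equations: $\mu_a = \nd_a\si$ from the top slot, $\nd_a\mu_b = -\P_{ab}\si - \La g_{ab}$ from the middle slot, and $\nd_a\La = \P_{ad}\mu^d$ from the bottom slot. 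The strategy is to read off that these are precisely the prolonged equations (\ref{cen}) and (\ref{ce3}) derived in Section \ref{ceprolong}, provided the extra curvature term vanishes.

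First I would treat the forward direction: suppose $\I_A$ is parallel with $\si = \X^A\I_A$ nowhere vanishing. The vanishing of the top slot gives $\mu_a = \nd_a\si$, and substituting into the middle slot gives $\nd_a\nd_b\si + \P_{ab}\si + \La g_{ab} = 0$, which is exactly equation (\ref{ce}); taking its trace-free part recovers (\ref{ce0}), so $\si$ solves the conformal-to-Einstein equation. Conversely, given a solution $\si$ of (\ref{ce0}) with $\si \neq 0$, I would build the candidate section by setting $\mu_a = \nd_a\si$ and defining $\La$ through the trace of (\ref{ce}), namely $\La = -\tfrac{1}{2}(\De\si + \P\si)$ where $\P = g^{ab}\P_{ab} = K$, so that (\ref{cen}) holds by construction. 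It then remains to verify that the bottom slot $\nd_a\La - \P_{ad}\mu^d = 0$ holds automatically. This is exactly where the flatness hypothesis enters: the prolongation computation gave $\nd_a\La = -\tfrac{1}{2}U_a\si + \P_a{}^d\mu_d$ in (\ref{ce3}), and since the surface is flat we have $Y_a = 0$, hence $U_a = \epsilon_{ac}Y^c = 0$, so the bottom slot vanishes identically.

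The main obstacle, or rather the point requiring care, is the conformal invariance of the construction: one must check that the section $\I_A$ assembled from $(\si, \mu_b, \La)$ is genuinely a well-defined tractor, independent of the representative metric, and that parallelism is a conformally invariant condition. I would verify this by confronting the transformation laws $\widehat\mu_b = \mu_b + \Up_b\si$ and $\widehat\La = \La - \Up^b\mu_b - \tfrac{1}{2}\Up_b\Up^b\si$ recorded above with how $\mu_a = \nd_a\si$ and the trace-derived $\La$ transform under $\widehat g_{ab} = \Om^2 g_{ab}$; the transformation law (\ref{Rho}) for $\P_{ab}$ together with the conformal behaviour of $\nd$ makes these match. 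The correspondence is then bijective because $\si = \X^A\I_A$ is recovered as the projecting part of $\I_A$ and the remaining components $\mu_b, \La$ are uniquely determined by $\si$ through the prolongation, so distinct parallel sections give distinct scales and vice versa, completing the proof.
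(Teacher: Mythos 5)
Your proof is correct and takes essentially the same approach as the paper: both directions amount to matching the slots of the standard tractor connection against the prolonged system, with the bottom slot killed by the flatness hypothesis $Y_a=0$ (hence $U_a=0$) via (\ref{ce3}). Your explicit appeal to (\ref{ce3}) simply spells out what the paper's proof calls ``a differential consequence of (\ref{ce}) in the flat setting,'' and the conformal-invariance check you add is already built into the paper's invariant construction of the tractor bundle and connection.
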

\begin{proof}
Suppose the flat M\"obius surface admits a solution to (\ref{ce}). In a particular conformal scale, a parallel section of the tractor bundle $\cE_A$ is given by
\begin{align*}
\I_A=\begin{pmatrix}
\si \\ \nd_a\si \\ -\frac{1}{2}(\De \si+K\si)
\end{pmatrix}
\end{align*}
and we find from the prolonged system of (\ref{ce}) that 
\begin{align*}
\nd_a\I_B=
\nd_a\begin{pmatrix}
\si \\ \nd_b\si \\ -\frac{1}{2}(\De \si+K\si)
\end{pmatrix}
=\begin{pmatrix}
\nd_a\si-\nd_a\si \\ \nd_a\nd_b\si+\P_{ab}\si-\frac{1}{2}(\De \si+K\si)g_{ab} \\ \nd_a(-\frac{1}{2}(\De \si+K\si))-\P_{ad}\nd^d\si
\end{pmatrix}
=0,
\end{align*}
where the equation on the bottom slot holds as a differential consequence of (\ref{ce}) in the flat setting. Conversely given a parallel section of the standard tractor connection with nowhere vanishing scale, $\si=\X^A\I_A$ defines a solution to (\ref{ce}).  
\end{proof}
The prolongation connection for the conformal Einstein equation on general (possibly non-flat) M\"obius surfaces is given by 
\begin{align}\label{prolongcon}
D_a\T_B=D_a\begin{pmatrix}
\si \\ \mu_b \\ \La
\end{pmatrix}=&\begin{pmatrix}
\nd_a\si-\mu_a \\ \nd_a\mu_b+\P_{ab}\si+\La g_{ab} \\ \nd_a\La-\P_{ad}\mu^d+\frac{1}{2}U_a\si
\end{pmatrix}=&\begin{pmatrix}
\nd_a\si-\mu_a \\ \nd_a\mu_b+\P_{ab}\si+\La g_{ab} \\ \nd_a\La-\P_{ad}\mu^d
\end{pmatrix}+\begin{pmatrix}
0 \\0 \\ \frac{1}{2}U_a\si
\end{pmatrix}\\
=&\nd_a\T_B+\frac{1}{2}U_a\T^A\X_A\X_B.\nonumber
\end{align}
Analogous to the proof of Proposition \ref{meflat}, we obtain
\begin{proposition}
There is a 1-1 correspondence between solutions of the conformal Einstein equation on M\"obius surfaces and sections of the standard tractor bundle with $\si=\X^A\I_A$ non-vanishing that are parallel with respect to the prolongation connection given by (\ref{prolongcon}). 
\end{proposition}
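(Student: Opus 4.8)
The plan is to mirror the proof of Proposition \ref{meflat}, replacing the standard tractor connection $\nd_a$ by the prolongation connection $D_a$ defined in (\ref{prolongcon}), and to check that the prolongation computation of Section \ref{ceprolong} gives exactly the vanishing of $D_a\T_B$. First I would take a solution $\si$ of (\ref{ce}) and build the candidate parallel section by setting $\mu_b=\nd_b\si$ and $\La=-\frac{1}{2}(\De\si+K\si)$; note that tracing (\ref{ce}) over $a,b$ gives $\De\si+\P_a{}^a\si+2\La=0$, and since $\P_a{}^a=K$ by property 1) of the M\"obius structure, this forces precisely $\La=-\frac{1}{2}(\De\si+K\si)$, so the bottom slot of $\I_A$ is the correct trace term and is not an independent choice.

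Next I would verify the three slots of $D_a\I_B=0$ one at a time. The top slot $\nd_a\si-\mu_a$ vanishes by the definition $\mu_a=\nd_a\si$. The middle slot $\nd_a\mu_b+\P_{ab}\si+\La g_{ab}$ is exactly the left-hand side of equation (\ref{ce}) with $\mu_b=\nd_b\si$, hence vanishes by hypothesis. The bottom slot is where the new curvature term enters: for $D_a$ it reads $\nd_a\La-\P_{ad}\mu^d+\frac{1}{2}U_a\si$, and this is precisely the prolonged identity (\ref{ce3}), namely $\nd_a\La=-\frac{1}{2}U_a\si+\P_a{}^d\mu_d$, which was derived in Section \ref{ceprolong} as a differential consequence of (\ref{ce}). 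Thus the bottom slot vanishes by (\ref{ce3}), and $\I_A$ is parallel for $D_a$.

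For the converse, I would start from a section $\T_A$ with slots $(\si,\mu_b,\La)$ that is parallel with respect to $D_a$ and has $\si=\X^A\I_A$ nowhere vanishing. Parallelism in the top slot forces $\mu_a=\nd_a\si$, and then the middle slot reading $D_a\T_B=0$ gives $\nd_a\nd_b\si+\P_{ab}\si+\La g_{ab}=0$, which is exactly (\ref{ce}); so $\si$ solves the conformal-to-Einstein equation. One should also check that the bottom slot $\nd_a\La-\P_{ad}\mu^d+\frac{1}{2}U_a\si=0$ is then automatically consistent, which it is because (\ref{ce3}) was obtained as a differential consequence of (\ref{ce}) and so holds for any solution; this confirms that the bottom slot imposes no extra condition beyond (\ref{ce}).

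The routine parts are the slot-by-slot substitutions; the one conceptual point worth stating carefully is that the correspondence is genuinely with parallel sections of $D_a$ rather than of $\nd_a$, and the reason is exactly the $\frac{1}{2}U_a\si$ term traced back to $Y_{ac}{}^c=\frac{1}{2}U_a$ in dimension $2$. I expect the main subtlety, rather than obstacle, to be emphasizing conformal invariance: since (\ref{ce}) is conformally invariant for $\si$ of weight $1$ and $D_a$ acts on weighted tractors, I would note that the construction of $\I_A$ and the verification are independent of the choice of representative metric $g\in[g]$, so the one-to-one correspondence is well defined on the M\"obius surface itself. I would close by remarking that when $U_a=0$ the connection $D_a$ reduces to the standard tractor connection $\nd_a$ and the statement specialises to Proposition \ref{meflat}.
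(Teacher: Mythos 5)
Your proposal is correct and follows exactly the route the paper intends: the paper gives no separate proof of this proposition, stating only that it is ``analogous to the proof of Proposition~\ref{meflat}'', and your slot-by-slot verification (top slot by definition of $\mu_a$, middle slot by (\ref{ce}), bottom slot by the prolonged identity (\ref{ce3}) cancelling the $\tfrac{1}{2}U_a\si$ term, plus the converse reading off (\ref{ce}) from the middle slot) is precisely what that analogy amounts to. Your additional observations --- that $\La$ is forced by the trace of (\ref{ce}) and that the construction reduces to Proposition~\ref{meflat} when $U_a=0$ --- are consistent with the paper's own remarks.
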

In the case where the M\"obius surface is flat, the prolongation connection agrees with the standard tractor connection, which has zero curvature, and we recover Proposition \ref{meflat}.

\section{Local obstructions to the existence of conformal Einstein metrics on generic M\"obius surfaces and examples}\label{ceobst}
In this section we observe how the additional curvature term appearing in the closed system affects deriving obstructions. We proceed by differentiating the closed system to yield algebraic constraints on the conformal-to-Einstein system. 
Differentiating equation (\ref{ce3}) once more gives
\begin{align*}
\nd_b\nd_a\La=-\frac{1}{2}(\nd_bU_a)\si-\frac{1}{2}U_a\mu_b+(\nd_b\P_a{}^d)\mu_d+\P_a{}^d(-\La\de_{bd}-\P_{bd}\si)
\end{align*} 
and contracting with the inverse volume form $\epsilon^{ab}$, we obtain 
\begin{equation}\label{const1}
Y_a\mu^a+\mu\si=0,
\end{equation}
where $\mu:=\frac{\nd_aY^a}{2}$ (see \cite{thesis} for the conformal weight and conformal transformation of $\mu$).
Equation (\ref{const1}) is the first constraint of the conformal-to-Einstein system. Differentiating equation (\ref{const1}) once more and using the closed system gives
\begin{align}\label{const2}
(\nd_eY_d)\mu^d+(\nd_e\mu)\si+\mu\mu_e-\La Y_e-\P_e{}^dY_d\si=0. 
\end{align}
We contract the free index with $U^e$ and get
\begin{align*}
(U^e\nd_eY_d)\mu^d+(U^e\nd_e\mu)\si+\mu(\mu_eU^e)-\P_{ed}U^eY^d\si=0. 
\end{align*}
Now from the conformal transformation rule ($Y_a$, $U_a$ have weight $-2$)
\begin{align*}
\widehat{U^e\nd_eY_d}=U^e\nd_eY_d-3\Up_eU^eY_d+U_d\Up^cY_c,
\end{align*}
we observe that the 1-form $V_d$ given by
\[
V_d:=U^e\nd_eY_d+\mu U_d-3\phi Y_d
\]
is conformally invariant of weight $-6$, where here $\phi:=\frac{\nd_aU^a}{2}$ is also introduced in \cite{thesis}. 
Hence, applying (\ref{const2}) and (\ref{const1}) we get
\begin{align*}
V_d\mu^d=(U^e\nd_eY_d+\mu U_d-3\phi Y_d)\mu^d=-(U^e\nd_e\mu)\si+\P_{ed}U^eY^d\si+3\phi\mu\si.
\end{align*}
Let us call 
\[
k:=\P_{ed}U^eY^d-U^e\nd_e\mu+3\phi\mu.
\]
This is a scalar density of weight $-4$ and under conformal rescaling, $\hat k=k+\Up_cV^c$,
and we have
\begin{equation}\label{const3}
V_d\mu^d=k\si.
\end{equation}
Combining equations (\ref{const1}) and (\ref{const3}), and assuming that $U_dV^d \neq 0$, we can solve for $\mu^d$ to obtain 
\begin{align*}
\mu^d=-\frac{\mu \epsilon^{ad}V_a\si}{U_cV^c}+\frac{k \si U^d}{U_cV^c}=\frac{1}{U_cV^c}\left(\mu \epsilon^{da}V_a+k U^d\right)\si.
\end{align*} 
We now have to consider the cases where $U_aV^a=0$ and $U_aV^a \neq 0$. This motivates the following:
\begin{definition}
A non-flat M\"obius surface is called generic if $U_aV^a \neq 0$, and non-generic if $U_aV^a=0$ (the set where $U_aV^a \neq 0$ is open and locally we can restrict to a neighbourhood so that $U_aV^a \neq 0$). 
\end{definition}
On non-flat generic M\"obius surfaces, we can define the 1-form
\begin{align}\label{solk}
K_a:=\frac{1}{U_cV^c}\left(\mu \epsilon_a{}^{b}V_b+k U_a\right).
\end{align} 
Then $\mu_a=K_a\si$, 
and substituting this into (\ref{cen}) we obtain
\begin{align*}
\nd_a\mu_b=(\nd_aK_b)\si+\mu_aK_b=-\La g_{ab}-\P_{ab}\si, 
\end{align*} 
which gives
\begin{align*}
(\nd_aK_b)\si+K_aK_b\si+\P_{ab}\si=-\La g_{ab}, 
\end{align*} 
or that
\[
\La=-\frac{1}{2}(\nd_cK^c+K_cK^c+K)\si, 
\]
so that 
\begin{align*}
(\nd_aK_b)\si+K_aK_b\si+\P_{ab}\si=\frac{1}{2}(\nd_cK^c+K_cK^c+K)\si g_{ab}, 
\end{align*} 
and assuming $\si$ is non-zero, we obtain
\begin{align*}
\nd_aK_b+K_aK_b+\P_{ab}=\frac{1}{2}(\nd_cK^c+K_cK^c+K) g_{ab}. 
\end{align*} 
We have the following proposition
\begin{proposition}\label{ceeab}
Let $(M^2,[g],[\P])$ be a non-flat generic M\"obius surface (i.e.\ with $U_aV^a \neq 0$). Suppose it admits a solution to the conformal-to-Einstein equation. Then the following tensor obstruction $E_{ab}$ given by
\begin{equation}\label{eab}
E_{ab}=\nd_aK_b+K_aK_b+\P_{ab}-\frac{1}{2}(\nd_cK^c+K_cK^c+K) g_{ab}
\end{equation}
must vanish, where $K_a$ is given by (\ref{solk}). Conversely, suppose the tensor $E_{ab}$ given by (\ref{eab}) 
vanishes for the $1$-form $K_a$ given by (\ref{solk}) on a non-flat generic M\"obius surface $(M^2,[g],[\P])$. Then $\nd_{[a}K_{b]}=0$, and taking $K_a=\nd_a\log \si$ for some function $\si$, we find that there exists a solution to the conformal-to-Einstein equation on $M^2$.   
\end{proposition}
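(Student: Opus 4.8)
The plan is to treat the two implications separately, noting first that the forward direction is essentially already assembled in the computations preceding the statement. Assuming a solution $\si$ to (\ref{ce}) exists, the prolonged system (\ref{cen}), (\ref{ce3}) together with the differential constraints (\ref{const1}) and (\ref{const3}) can be solved algebraically for $\mu^d$ on the generic locus $U_cV^c\neq 0$, yielding $\mu_a=K_a\si$ with $K_a$ the invariant $1$-form (\ref{solk}). Since $\mu_a=\nd_a\si$ by definition, I would substitute $\mu_a=K_a\si$ into (\ref{cen}) and expand $\nd_a\mu_b=\nd_a(K_b\si)=(\nd_aK_b+K_aK_b)\si$, obtaining $(\nd_aK_b+K_aK_b+\P_{ab})\si=-\La g_{ab}$. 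Dividing by the nonvanishing $\si$ and eliminating $\La$ by taking the metric trace then produces exactly $E_{ab}=0$, so the obstruction vanishes whenever a solution exists.

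For the converse the key observation is that $E_{ab}$ is \emph{not} symmetric: among its constituents only $\nd_aK_b$ has a nonzero antisymmetric part, since $K_aK_b$, $\P_{ab}$ and $g_{ab}$ are all symmetric. Hence $E_{[ab]}=\nd_{[a}K_{b]}$, and the hypothesis $E_{ab}=0$ forces $\nd_{[a}K_{b]}=0$ at once. Thus $K_a$ is closed, and by the Poincar\'e lemma I may write $K_a=\nd_a\log\si$ for some locally defined nowhere-zero $\si$, fixed up to a multiplicative constant.

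It then remains to verify that this $\si$ solves the conformal-to-Einstein equation. With $K_a=\nd_a\log\si$ one has $\nd_a\si=K_a\si$, so the elementary identity $\nd_a(K_b\si)=(\nd_aK_b+K_aK_b)\si$ lets me rewrite the symmetric content of $E_{ab}=0$, after multiplying through by $\si$, as $\nd_a\nd_b\si+\P_{ab}\si=\tfrac{1}{2}(\nd_cK^c+K_cK^c+K)\si\,g_{ab}$. Setting $\La:=-\tfrac{1}{2}(\nd_cK^c+K_cK^c+K)\si$ turns this into $\nd_a\nd_b\si+\P_{ab}\si+\La g_{ab}=0$, which is precisely (\ref{ce}); a short check using $\De\si=\nd^c(K_c\si)=(\nd_cK^c+K_cK^c)\si$ confirms that this $\La$ agrees with the trace term $-\tfrac{1}{2}(\De\si+K\si)$, so the bottom-slot relation is automatically consistent.

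I expect the only real subtlety to be bookkeeping rather than a genuine difficulty: one must check that no independent information is lost when splitting $E_{ab}$ into its symmetric and antisymmetric parts, and that the trace term defining $\La$ is the same whether read off from $E_{ab}$ or from the Laplacian of $\si$. The forward direction is a recap of the preceding derivation, so the new content lies in the converse, where the decisive point is that the antisymmetric part of the obstruction is exactly the integrability condition $\nd_{[a}K_{b]}=0$. Once closedness is in hand, recovering (\ref{ce}) is a one-line substitution.
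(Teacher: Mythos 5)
Your proposal is correct and follows essentially the same route as the paper: the forward direction is the computation the paper carries out immediately before the proposition (solving the constraints (\ref{const1}) and (\ref{const3}) for $\mu_a=K_a\si$ on the generic locus, substituting into (\ref{cen}), and eliminating $\La$ by the trace), and the converse rests on the same observation the paper asserts, namely that the antisymmetric part of $E_{ab}$ is exactly $\nd_{[a}K_{b]}$, so closedness of $K_a$ follows and $\si$ with $K_a=\nd_a\log\si$ solves (\ref{ce}). Your explicit identification $E_{[ab]}=\nd_{[a}K_{b]}$ and the consistency check that $\La$ agrees with $-\tfrac{1}{2}(\De\si+K\si)$ merely spell out details the paper leaves implicit.
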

We shall discuss the non-generic case where $U_aV^a=0$ in Section \ref{nongeneric}. We now give $2$ examples of non-flat generic M\"obius structures on the Euclidean plane $\R^2$ (with $K=0$), one with vanishing obstruction and one without. 
\begin{example}
The first example will be the M\"obius structure on $\R^2$ given by 
\begin{align*}
\P=&\P_{11}dxdx+2\P_{12}dxdy+\P_{22}dydy\\
=&\left(y-x+\frac{1}{2}y^4-\frac{1}{2}x^4\right)dxdx-2x^2y^2dxdy+\left(x-y+\frac{1}{2}x^4-\frac{1}{2}y^4\right)dydy.
\end{align*}
For this M\"obius structure, we find that 
%we obtain
%\begin{align*}
%Y=&Y_1dx+Y_2dy=(-4xy^2-4y^3-2)dx+(4x^3+4x^2y+2)dy,\\
%\phi=&6x^2+8xy+6y^2,\\
%\mu=&2x^2-2y^2,\\
%V=&V_1dx+V_2dy\\
%=&(48x^3y^2+112x^2y^3+88xy^4+24y^5+32xy+8x^5+40x^2+8yx^4)dx\\
%&+(-24x^5-88yx^4-32xy-112x^3y^2-48x^2y^3-8xy^4-8y^5-40y^2)dy,%\\
%U_aV^a=&80x^2-128xy^4-128y^5+128yx^4+128x^5+192x^5y^3-64y^7x\\
%&-192y^5x^3+64x^7y-80y^2-128y^6x^2-32y^8+128y^2x^6+32x^8,\\
%k=&40x^4-40y^4+24x^5y^2+24x^4y^3-24y^4x^3-24y^5x^2\\&+8x^6y-32y^3x-8y^6x+32x^3y+8x^7-8y^7,
%\end{align*}
$K_a$ remarkably simplifies to give
\[
K=K_1dx+K_2dy=x^2dx+y^2dy
\]
and
\begin{align*}
E_{ab}=\partial_aK_b+K_aK_b+\P_{ab}-\frac{1}{2}(\partial_cK^c+K_cK^c) g_{ab}=0,
\end{align*}
so that the obstruction vanishes and taking $\si=e^{\frac{x^3+y^3}{3}}$ gives us a solution to the conformal-to-Einstein equation.
\end{example}
\begin{example}  
The second example will be the M\"obius structure given by $\P_{ab}=x_{(a}\epsilon_{b)c}x^c$ on $\R^2$. We shall show that this M\"obius structure admits no solution to (\ref{ce}) by showing that the tensor obstruction does not vanish. Let $x^a$ be standard local coordinates on $\R^2$ so that $\partial_ax_b=\de_{ab}$. A computation shows that 
\[
\partial_a\P_{bc}=\de_{a(b}\epsilon_{c)d}x^d+x_{(b}\epsilon_{c)a},
\] 
from which we obtain
\[
Y_c=2\epsilon^{ab}\partial_a\P_{bc}=-4x_c, \qquad U_c=-4\epsilon_{ca}x^a.
\]
%This gives
%\[
%\partial_aY_c=-4\de_{ac},\qquad %\partial_aU_c=-4\epsilon_{ca},
%\]
%and so
%\[
%\mu=\frac{\partial_aY^a}{2}=-4, \qquad \phi=\frac{\partial_aU^a}{2}=0.
%\]
%We also obtain $V_a=-8U_a$, and so $U_aV^a=-8U_aU^a=-128x_cx^c$. 
Further computations of the various quantities give us %$k=8(x_bx^b)^2$,
%and so
\begin{align*}
K_a=&\frac{1}{4x_cx^c}\left((x_cx^c)^2\epsilon_{ab}x^b-4x_a\right).
\end{align*}
We then find (with the aid of MAPLE) that
\begin{align*}
E_{ab}=\partial_aK_b+K_aK_b+\P_{ab}-\frac{1}{2}(\partial_cK^c+K_cK^c) g_{ab} \neq 0,
\end{align*}
and so we conclude that this M\"obius structure admits no solution to the conformal-to-Einstein equation. Also observe that $\partial_{[a}K_{b]}\neq 0$ for this M\"obius structure. 
\end{example}

\section{Non-generic M\"obius surfaces and examples}\label{nongeneric}
Here we examine the situation of the conformal-to-Einstein equation on non-flat non-generic M\"obius surface, namely the case where $Y_a \neq0$ and $U_aV^a=0$. One obstruction can be computed quite readily; since $U_aV^a=0$, $V^a=fY^a$ for some scalar density $f$ of weight $-4$. Under the conditions that both $Y_a \neq 0$ and $U_aV^a=0$ hold on the M\"obius structure, the quantity $f$ is invariantly defined but it is not a M\"obius invariant in the classical sense since it is firstly rational (and not polynomial) in the jets of the conformal structure and secondly defined only on the subclass of M\"obius structure for which $Y_a \neq 0$ and the invariant $U_aV^a=0$.
Then 
\begin{align*}
k\si=V_a\mu^a=fY_a\mu^a=-f\mu\si, 
\end{align*}
so that $k+f\mu$ is a conformally invariant obstruction to conformal-to-Einstein on non-generic M\"obius surfaces. 
For example, the M\"obius structure on $\R^2$ given by
\[
\P=\P_{11}dx^2+2\P_{12}dxdy+\P_{22}dy^2=\left(\frac{x^2}{2}-\frac{y^2}{2}\right)dx^2+\left(\frac{y^2}{2}-\frac{x^2}{2}\right)dy^2
\]
has $U_aV^a=0$ as we compute and find
\begin{align*}
Y^a=\begin{pmatrix}2y\\-2x\end{pmatrix},\quad
U^a=\begin{pmatrix}-2x\\-2y\end{pmatrix},\quad 
V^a=\begin{pmatrix}8y\\-8x\end{pmatrix}.
\end{align*}
Here $f=4$, $\mu=0$, $k=4xy(y^2-x^2)$, so that the obstruction $k+f\mu=4xy^3-4x^3y\neq 0$ and we can conclude that the M\"obius structure does not admit a conformal-to-Einstein scale. Ideally we would like to find a complete set of obstructions to characterise the non-generic setting in a similar way as the generic setting (the content of Proposition \ref{ceeab}), but this case does not seem to be amenable to extracting obstructions. We are able to show however that provided two additional invariant conditions hold, the conformal Einstein equation on non-generic M\"obius surfaces reduces to a second order linear homogeneous ODE. We first need the following:  
\begin{lemma}
Let $(M^2,[g],[\P])$ be a non-flat non-generic M\"obius surface. Let $\rho:=Y^aY_a=U^aU_a$ be a conformally invariant density of weight $-6$ (the quantity $\rho$ is also introduced and used in \cite{sfmew}). Then the conformally invariant condition
\begin{equation}\label{murho}
\frac{Y^a\nd_a \rho}{6\rho}=\mu
\end{equation}
must hold. 
\end{lemma}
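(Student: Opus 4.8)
The plan is to exploit the non-generic hypothesis $U_aV^a=0$ together with the already-established constraint equations to pin down the value of $\mu$. Recall that on a non-flat non-generic M\"obius surface we have $V^a=fY^a$ for some density $f$, and from the derivation preceding the lemma we have $k\si=V_a\mu^a=-f\mu\si$, giving the scalar identity $k+f\mu=0$ as an obstruction. The strategy is to obtain a second independent expression for the same combination of invariants by differentiating the invariant $\rho:=Y^aY_a$ and contracting judiciously with $Y^a$, then comparing with the definitions of $V_d$, $k$, and $\phi$.

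First I would compute $Y^a\nd_a\rho$ directly. Since $\rho=Y^bY_b$ we have $\nd_a\rho=2Y^b\nd_aY_b$ (being careful that the metric used to raise indices is covariantly constant), so $Y^a\nd_a\rho=2Y^aY^b\nd_aY_b$. The right-hand side involves $Y^a\nd_aY_b$, which is precisely the object appearing in the definition $V_d=U^e\nd_eY_d+\mu U_d-3\phi Y_d$ — except that $V_d$ uses $U^e\nd_eY_d$ rather than $Y^e\nd_eY_d$. The key manipulation will therefore be to relate these two contractions using the identity $U_a=\epsilon_{ac}Y^c$ (and the dual relation $Y_a$ versus $U_a$) together with the non-generic condition. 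I would contract the defining relation for $V_d$ with $Y^d$: since $V^a=fY^a$, the left side gives $V_dY^d=f\rho$, while the right side gives $(U^e\nd_eY_d)Y^d+\mu U_dY^d-3\phi Y_dY^d$. The middle term $U_dY^d=\epsilon_{dc}Y^cY^d=0$ by antisymmetry, and the last term is $-3\phi\rho$. This isolates $(U^e\nd_eY_d)Y^d=f\rho+3\phi\rho$, which I would then convert into a statement about $Y^e\nd_eY_d\,Y^d=\tfrac12 Y^e\nd_e\rho$ via the $\epsilon$-duality between $U$ and $Y$ and the fact that $Y^d\nd_e Y_d=\tfrac12\nd_e\rho$.

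The main obstacle will be bookkeeping the two distinct contractions $U^e\nd_eY_d$ and $Y^e\nd_eY_d$ and showing they are proportional (or explicitly related) on the non-generic locus. Because $U_a=\epsilon_{ac}Y^c$ in two dimensions, any vector splits into its component along $Y^a$ and along $U^a$, and $\{Y^a,U^a\}$ is an orthogonal frame with $|Y|^2=|U|^2=\rho$; I expect the antisymmetry of $\epsilon$ to force the cross-contractions to collapse, so that the condition $U_aV^a=0$ feeds back to eliminate the $U$-direction component. Once the relation $Y^aY^b\nd_aY_b$ is expressed through $f$, $\phi$, and $\mu$, I would combine it with the obstruction $k+f\mu=0$ and the definition $\phi=\tfrac12\nd_aU^a$ to cancel the auxiliary quantities $f$ and $\phi$, leaving the clean identity $\tfrac{Y^a\nd_a\rho}{6\rho}=\mu$. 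The factor of $6$ should emerge naturally from the weight-$-6$ homogeneity of $\rho$ combined with the coefficient $3$ in the definitions of $V_d$ and $\phi$; verifying that the numerical coefficients match is where I would expect to spend the most care, and I would cross-check using the explicit non-generic example with $Y^a=(2y,-2x)$, $\rho=4(x^2+y^2)$, and $\mu=0$ to confirm both sides vanish.
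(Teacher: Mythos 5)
Your proposal has two genuine gaps, and the first is fatal to the computation as outlined. Contracting $V_d=U^e\nd_eY_d+\mu U_d-3\phi Y_d$ with $Y^d$ does give $(U^e\nd_eY_d)Y^d=(f+3\phi)\rho$, but since $(U^e\nd_eY_d)Y^d=\tfrac12 U^e\nd_e\rho$, what you have computed is the derivative of $\rho$ along $U^a$, expressed through the unknown $f$; this contraction is essentially the \emph{definition} of $f$ and carries no constraint. The lemma concerns the derivative of $\rho$ along $Y^a$, and the advertised conversion of $(U^e\nd_eY_d)Y^d$ into $(Y^e\nd_eY_d)Y^d$ ``via $\epsilon$-duality'' does not exist: $\nd_aY_b$ has four independent components, and the two scalars $U^e\nd_e\rho$ and $Y^e\nd_e\rho$ are independent pieces of its jet (they are the two components of the gradient of $\rho$ in the orthogonal frame $\{Y^a,U^a\}$), so no algebraic identity built from $U_a=\epsilon_{ab}Y^b$ can turn one into the other. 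The hypothesis $U_aV^a=0$ enters as an actual constraint only through the contraction with $U^d$, which your plan never performs: it gives $U^dU^e\nd_eY_d=-\mu\rho$ (using $U^dY_d=0$, $U^dU_d=\rho$), and then writing $g^{ed}=\rho^{-1}(Y^eY^d+U^eU^d)$ in the trace identity $g^{ed}\nd_eY_d=2\mu$ yields $Y^eY^d\nd_eY_d=2\mu\rho+\mu\rho=3\mu\rho$, i.e.\ $Y^e\nd_e\rho=6\mu\rho$. This is precisely the content of the paper's proof (there organized as an integration by parts on $U^aY_a=0$ followed by splitting $\nd_bU_a$ into its skew and symmetric parts); note also that the factor $6$ arises from this bookkeeping, not from the conformal weight of $\rho$ as you suggest.

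The second gap is logical: you propose to feed in the relation $k+f\mu=0$, but that relation is an \emph{obstruction}, valid only on non-generic surfaces that additionally admit a solution of the conformal-to-Einstein equation. The lemma is an unconditional identity on all non-flat non-generic M\"obius surfaces --- it must be, since it is used afterwards to prove that $\rho^{-1/3}U_a$ is closed, a statement made with no solvability assumption. The paper's own non-generic example with $\P_{11}=\tfrac{x^2}{2}-\tfrac{y^2}{2}=-\P_{22}$, $\P_{12}=0$ has $k+f\mu=4xy^3-4x^3y\neq 0$, so this input is simply unavailable under the lemma's hypotheses (and in any case $k$ involves one jet order more than the identity you are trying to prove, so it could not be expected to produce it).
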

\begin{proof}
From the definition of non-generic M\"obius surface, we obtain
\begin{align*}
0=U^aV_a=&U^a(U^b\nd_bY_a+\mu U_a-3\phi Y_a)\\
=&-Y^aU^b\nd_bU_a+\mu \rho\\
=&-Y^aU^b(\nd_bU_a-\nd_aU_b)-Y^aU^b\nd_aU_b+\mu \rho\\
=&-Y^aU^b\epsilon_{ba}\epsilon^{cd}(\nd_cU_d)-\frac{Y^a\nd_a\rho}{2}+\mu \rho\\
=&Y^aU^b\epsilon_{ba}(\nd_cY^c)-\frac{Y^a\nd_a\rho}{2}+\mu \rho\\
=&2\rho\mu-\frac{Y^a\nd_a\rho}{2}+\mu \rho\\
=&-\frac{Y^a\nd_a\rho}{2}+3\mu \rho,
\end{align*}
and so equation (\ref{murho}) must hold (since the surface is non-flat, $\rho$ is non-zero). 
\end{proof}
\begin{proposition}
Let $(M^2,[g],[\P])$ be a non-flat non-generic M\"obius surface. Then the 1-form 
\[
\om_a:=\rho^{-\frac{1}{3}}U_a
\]
is closed and so is locally the gradient of some function $\eta$, so that $\om_a=\nd_a\eta$. 
\end{proposition}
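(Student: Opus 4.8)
The plan is to fix a representative metric $g_{ab}\in[g]$, work with its Levi-Civita connection $\nd$, and verify directly that $\nd_{[a}\om_{b]}=0$. Since $\rho$ has weight $-6$ and $U_a$ has weight $-2$, the factor $\rho^{-1/3}$ has weight $+2$ and hence $\om_a$ has conformal weight $0$; this makes $\om_a$ scale-independent and its exterior derivative a genuine M\"obius invariant, so that closedness checked in one scale is closedness in every scale. Once $d\om=0$ is established, the Poincar\'e lemma furnishes locally a function $\eta$ with $\om_a=\nd_a\eta$, which is the desired conclusion.

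The first step is to exploit that on a surface a $2$-form is determined by its contraction with the inverse volume form, so it suffices to prove the single scalar identity $\epsilon^{ab}\nd_a\om_b=0$. Expanding by the product rule,
\begin{align*}
\epsilon^{ab}\nd_a\om_b=-\tfrac{1}{3}\rho^{-4/3}(\nd_a\rho)(\epsilon^{ab}U_b)+\rho^{-1/3}(\epsilon^{ab}\nd_aU_b).
\end{align*}
The second step is to rewrite the two contractions purely in terms of $Y$ and $\mu$, using $U_a=\epsilon_{ac}Y^c$, the parallelism of the volume form ($\nd\epsilon=0$), and the normalisation $\epsilon^{ab}\epsilon_{cb}=\de_c{}^a$. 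This yields $\epsilon^{ab}U_b=-Y^a$ and $\epsilon^{ab}\nd_aU_b=-\nd_cY^c=-2\mu$, so the right-hand side collapses to $\tfrac{1}{3}\rho^{-4/3}(Y^a\nd_a\rho)-2\mu\rho^{-1/3}$.

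The final and decisive step is to invoke the preceding lemma in the equivalent form $Y^a\nd_a\rho=6\mu\rho$, which is exactly the relation forcing the two surviving terms to cancel, since $\tfrac{1}{3}\rho^{-4/3}(6\mu\rho)=2\mu\rho^{-1/3}$. Hence $\epsilon^{ab}\nd_a\om_b=0$ and $\om$ is closed. I do not expect a genuine obstacle in the calculation itself; the only real content is recognising that the exponent $-\tfrac{1}{3}$ in the definition of $\om_a$ is precisely tuned to the coefficient $6$ produced by the non-generic lemma, and that the identity $\epsilon^{ab}U_b=-Y^a$ is what converts the $\nd\rho$ term into the directional derivative $Y^a\nd_a\rho$ that the lemma controls. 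The one point worth double-checking carefully is the weight bookkeeping guaranteeing that $\om_a$ is scale-invariant, since otherwise ``closed'' would not be well defined; this is immediate from the weights of $\rho$ and $U_a$ recorded above.
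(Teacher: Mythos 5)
Your proof is correct and follows essentially the same route as the paper's: both compute $\epsilon^{ab}\nd_a\om_b$ by the product rule, convert the contractions via $U_a=\epsilon_{ac}Y^c$ into $\tfrac{1}{3}\rho^{-4/3}Y^a\nd_a\rho-2\mu\rho^{-1/3}$, and conclude by the lemma $Y^a\nd_a\rho=6\mu\rho$. Your extra remark on the weight bookkeeping (that $\om_a$ has weight $0$, so closedness is scale-independent) is a sensible clarification the paper leaves implicit, but it does not change the argument.
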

\begin{proof}
We find that
\begin{align*}
\epsilon^{ab}\nd_a\om_b=&\epsilon^{ab}(-\frac{1}{3}\rho^{-\frac{4}{3}}\nd_a\rho U_b+\rho^{-\frac{1}{3}}\nd_aU_b)\\
=&\epsilon^{ab}\left(-\frac{\nd_a\rho}{3\rho} U_b+\nd_aU_b\right)\rho^{-\frac{1}{3}}\\
=&\left(\frac{Y^a\nd_a\rho}{3\rho}-2\mu\right)\rho^{-\frac{1}{3}}\\
=&0,
\end{align*}
where the last equality holds by equation (\ref{murho}). Hence $\om_a$ is closed. 
\end{proof}
Observe that $Y^a\nd_a\eta=0$, in other words the directional derivative of $\eta$ along $Y^a$ is zero. The integral curves of $Y^a$ form the characteristic lines of the equation $Y^a\nd_a\eta=0$ and $\eta$ is constant along those lines. On a non-generic M\"obius surface satisfying (\ref{ce0}), we have from (\ref{const1}) that
\begin{align*}
0=Y_a\mu^a+\mu\si=Y^a\mu_a+\frac{Y^a\nd_a \rho}{6\rho}\si=Y^a\left(\nd_a\si+\frac{\nd_a \rho}{6\rho}\si\right)
\end{align*}
must hold. This implies
\begin{align*}
Y^a\nd_a\log(\si\rho^{\frac{1}{6}})=0,
\end{align*}
and so $\log(\si\rho^{\frac{1}{6}})$ and $\eta$ are functionally dependent and viewing $\eta$ as a coordinate function we can express $\si=\rho^{-\frac{1}{6}}e^{s(\eta)}$ for some function $s$ (up to some constant multiple).
Differentiating $\si$ again gives
\begin{align}\label{mua}
\mu_a=\nd_a\si=\left(s_{\eta}\nd_a\eta-\frac{\nd_a \rho}{6\rho}\right)\si
\end{align}
and substituting this formula for $\mu_a$ into (\ref{cen}) gives us a second-order differential equation $s$ has to satisfy if (\ref{ce0}) holds. We can compute this explicitly. We now show
\begin{proposition}\label{nongenericde}
Let $(M^2,[g],[\P])$ be a non-flat non-generic M\"obius surface. Then the conformal-to-Einstein equation (\ref{ce}) is equivalent to a second order differential equation given by (\ref{sode}). If in addition, 2 further M\"obius invariant conditions given later by (\ref{constode1}) and (\ref{constode2}) hold, the equation reduces to a second order linear homogeneous ODE.  
\end{proposition}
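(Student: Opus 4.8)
The plan is to substitute the ansatz $\si=\rho^{-\frac16}e^{s(\eta)}$, which the analysis preceding the statement has already forced, back into the conformal-to-Einstein equation (\ref{ce}) and determine what survives. Writing $\si=e^{\Theta}$ with $\Theta=-\frac16\log\rho+s(\eta)$, the Hessian factorises as $\nd_a\nd_b\si=\si\big(\nd_a\nd_b\Theta+\nd_a\Theta\,\nd_b\Theta\big)$, so the trace-free content of (\ref{ce}), namely $(\nd_a\nd_b\si+\P_{ab}\si)_\circ=0$, becomes a trace-free symmetric $2$-tensor equation in which $s$ enters only through $s_\eta$ and through the combination $s_{\eta\eta}+s_\eta^2$; the trace of (\ref{ce}) merely fixes $\La$ and carries no further information. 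The decisive structural input is $\nd_a\eta=\rho^{-\frac13}U_a$: the second-derivative data of $s$ occurs solely in the term $(s_{\eta\eta}+s_\eta^2)\,\nd_a\eta\,\nd_b\eta=(s_{\eta\eta}+s_\eta^2)\rho^{-\frac23}U_aU_b$, hence is supported entirely on the $U_aU_b$ direction.

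First I would introduce the orthonormal frame $\{\rho^{-1/2}Y^a,\rho^{-1/2}U^a\}$, in which a trace-free symmetric $2$-tensor has exactly two components, a ``diagonal'' one along $U_aU_b-\frac\rho2 g_{ab}$ and an ``off-diagonal'' one along $Y_{(a}U_{b)}$. Contracting (\ref{cen}) with $U^aU^b-Y^aY^b$, which annihilates $g_{ab}$ and therefore eliminates $\La$, isolates the second-order part: since $U^aU^b\nd_a\eta\,\nd_b\eta=\rho^{4/3}$ while $Y^aY^b\nd_a\eta\,\nd_b\eta=0$, the coefficient of $s_{\eta\eta}+s_\eta^2$ is nondegenerate and one reads off the second-order equation (\ref{sode}) for $s$. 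The complementary, off-diagonal component (contraction with $Y^aU^b$) contains no $s_{\eta\eta}$ and is only first order in $s$; the equivalence assertion amounts to showing that this off-diagonal relation is not an independent constraint but follows from (\ref{sode}) together with the already-established first constraint (\ref{const1}), its differentiated form (\ref{const2}), and the non-generic identity (\ref{murho}). Note that (\ref{const1}) is itself equivalent to $Y^a\nd_a(s(\eta))=0$, so the ansatz already encodes one scalar relation; the compatibility to be checked is that the remaining tensorial component then closes up.

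To obtain the linear form I would change the dependent variable to $\psi:=e^{s(\eta)}$, so that $s_\eta=\psi_\eta/\psi$ and $s_{\eta\eta}+s_\eta^2=\psi_{\eta\eta}/\psi$; multiplying (\ref{sode}) through by $\psi$ turns it into $C_2\psi_{\eta\eta}+C_1\psi_\eta+C_0\psi=0$ for coefficients $C_0,C_1,C_2$ assembled from $\rho$, $\P_{ab}$ and $\eta$. This is automatically linear and homogeneous, reflecting the linearity of (\ref{ce}) in $\si$. The only remaining point is that $C_0,C_1,C_2$ are a priori functions on $M^2$ rather than of $\eta$ alone; dividing by the nonvanishing $C_2$, the relation is a genuine ordinary differential equation in $\eta$ precisely when $C_1/C_2$ and $C_0/C_2$ are annihilated by $Y^a\nd_a$, equivalently are functions of $\eta$. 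These two requirements are exactly the M\"obius-invariant conditions (\ref{constode1}) and (\ref{constode2}), and under them (\ref{sode}) reduces to a second-order linear homogeneous ODE in $\eta$.

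I expect the main obstacle to be the off-diagonal compatibility step: proving that the $Y_{(a}U_{b)}$ component of the trace-free equation is a consequence of the material already in hand, rather than an extra constraint, requires combining the Hessian of $\rho^{-1/6}$ with the non-generic identities and tracking the conformal weights of $Y_a$, $U_a$, $\rho$ and $\mu$ exactly; it is here that the precise normalisation $\rho^{-\frac16}$ is essential. Once that is secured, extracting the explicit coefficients $C_0,C_1,C_2$ and recognising the two invariant conditions is routine but lengthy algebra, most cleanly organised in the $\{Y^a,U^a\}$ frame.
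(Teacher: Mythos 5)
Your proposal follows essentially the same route as the paper: the same ansatz $\si=\rho^{-\frac{1}{6}}e^{s(\eta)}$, the same decomposition of the substituted equation in the $\{Y^a,U^a\}$ frame (the paper contracts with $Y^aY^b$ and $U^aU^b$ separately and then eliminates $\La$ by substituting the $Y^aY^b$ relation into the $U^aU^b$ one, which is exactly your $U^aU^b-Y^aY^b$ contraction), the same linearization $\psi=e^{s}$ (the paper's $s_\eta=\xi_\eta/\xi$), and the same identification of (\ref{constode1}) and (\ref{constode2}) as the conditions making the coefficients functions of $\eta$ alone. The one step you flag as the main obstacle --- showing that the off-diagonal $Y^aU^b$ component is not an independent constraint --- is resolved in the paper by an explicit computation showing that this contraction equals $k+f\mu$, which is precisely the obstruction already shown (from (\ref{const1}) and (\ref{const3})) to vanish whenever a solution exists on a non-flat non-generic M\"obius surface.
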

\begin{proof}
Differentiating (\ref{mua}) we find that
\begin{align*}
\nd_b\nd_a\si=&\left(s_{\eta\eta}\nd_b \eta\nd_a\eta+s_{\eta}\nd_b\nd_a\eta-\frac{\nd_b\nd_a \rho}{6\rho}+\frac{\nd_b\rho \nd_a\rho}{6\rho^2}\right)\si\\
+&\left(s_{\eta}\nd_a\eta-\frac{\nd_a \rho}{6\rho}\right)\left(s_{\eta}\nd_b\eta-\frac{\nd_b \rho}{6\rho}\right)\si,
\end{align*}
so that substituting this expression into (\ref{ce}) gives
\begin{align}\label{subode}
&\left(s_{\eta\eta}\nd_b \eta\nd_a\eta+s_{\eta}\nd_b\nd_a\eta-\frac{\nd_b\nd_a \rho}{6\rho}+\frac{\nd_b\rho \nd_a\rho}{6\rho^2}\right)\si\nonumber\\
+&\left(s_{\eta}\nd_a\eta-\frac{\nd_a \rho}{6\rho}\right)\left(s_{\eta}\nd_b\eta-\frac{\nd_b \rho}{6\rho}\right)\si+\P_{ab}\si+\La g_{ab}=0.
\end{align}
Contracting (\ref{subode}) with $Y^aU^b$ gives
\begin{align}\label{subodeuy}
&\left(s_{\eta}Y^aU^b\nd_b\nd_a\eta-\frac{Y^aU^b\nd_b\nd_a \rho}{6\rho}+\frac{U^b\nd_b\rho Y^a\nd_a\rho}{6\rho^2}\right)\si \nonumber\\
+&\left(-\frac{Y^a\nd_a \rho}{6\rho}\right)\left(s_{\eta}U^b\nd_b\eta-\frac{U^b\nd_b \rho}{6\rho}\right)\si+\P_{ab}Y^aU^b\si=0.
\end{align}
Observing that $U^a\nd_a\eta=\rho^{\frac{2}{3}}$, we find that
\begin{align*}
Y^aU^b\nd_a\nd_b\eta=&-(U^c\nd_cY^a)\nd_a\eta\\
=&-(fY^a+3\phi Y^a-\mu U^a)\nd_a \eta\\
=&\mu\rho^{\frac{2}{3}},
\end{align*}
and hence equation (\ref{subodeuy}) simplifies to give 
\begin{align*}
\P_{ab}Y^aU^b-\frac{Y^aU^b\nd_b\nd_a \rho}{6\rho}+\frac{7U^b\nd_b\rho Y^a\nd_a\rho}{36\rho^2}=0
\end{align*}
after dividing throughout by $\si$. A computation shows that the expression on the left hand side equals $k+f\mu$, which must vanish if equation (\ref{ce0}) holds.  
Let 
\begin{align*}
\tilde Q=\P_{ab}Y^aY^b-\frac{Y^aY^b\nd_b\nd_a \rho}{6\rho}+\frac{7Y^b\nd_b\rho Y^a\nd_a\rho}{36\rho^2}. 
\end{align*}
We find that under a conformal rescaling, $\widehat {\tilde Q}={\tilde Q}+\frac{1}{2}\Up_c\Up^c\rho-\frac{\Up^c\nd_c\rho}{6}$.
Contracting equation (\ref{subode}) with $Y^aY^b$ gives
\begin{align*}
&\left(s_{\eta}Y^bY^a\nd_b\nd_a\eta-\frac{Y^bY^a\nd_b\nd_a \rho}{6\rho}+\frac{7Y^b\nd_b\rho Y^a\nd_a\rho}{36\rho^2}\right)\si+\P_{ab}Y^aY^b\si+\La \rho\\
=&\left(s_{\eta}Y^bY^a\nd_b\nd_a\eta\right)\si+\tilde Q\si+\La \rho\\
=&0
\end{align*}
and we find that
\begin{align*}
Y^bY^a\nd_b\nd_a\eta=-(Y^c\nd_cY^a)\nd_a\eta=-(f+\phi)U^a\nd_a\eta=-(f+\phi)\rho^{\frac{2}{3}},
\end{align*}
so that
\begin{align*}
\left(s_{\eta}Y^bY^a\nd_b\nd_a\eta\right)\si+\tilde Q\si+\La \rho=-\left(s_{\eta}(f+\phi)\rho^{\frac{2}{3}}\right)\si+\tilde Q\si+\La \rho=0.
\end{align*}
%Let $\tilde \phi=\phi+f$. Then 
%\begin{align*}
%s_{\eta}\tilde \phi\rho^{\frac{2}{3}}\si=\tilde Q\si+\La \rho.
%\end{align*}
Contracting equation (\ref{subode}) with $U^aU^b$ and making the appropriate substitution gives
\begin{align*}
\left(s_{\eta \eta} \rho^{\frac{4}{3}}+s_{\eta}s_{\eta}\rho^{\frac{4}{3}}+s_{\eta}\left(U^bU^a\nd_b\nd_a\eta-\frac{U^b\nd_b \rho}{3\rho}\rho^{\frac{2}{3}}\right)\right) \sigma+\tilde R\si+\La \rho=0,
\end{align*}
where
\begin{align*}
\tilde R=\P_{ab}U^aU^b-\frac{U^aU^b\nd_b\nd_a \rho}{6\rho}+\frac{7U^b\nd_b\rho U^a\nd_a\rho}{36\rho^2}. 
\end{align*}
%\begin{align*}
%&\left(s_{\eta \eta}(U^a\nd_a\eta)(U^b\nd_b\eta)+s_{\eta}U^bU^a\nd_b\nd_a\eta-\frac{U^bU^a\nd_b\nd_a \rho}{6\rho}+\frac{U^b\nd_b\rho U^a\nd_a\rho}{6\rho^2}\right)\si\\
%+&\left(s_{\eta}U^a\nd_a\eta-\frac{U^a\nd_a \rho}{6\rho}\right)\left(s_{\eta}U^b\nd_b\eta-\frac{U^b\nd_b \rho}{6\rho}\right)\si
%+\P_{ab}U^aU^b\si+\La \rho\\
%=&0,
%\end{align*}
%so that
%\begin{align*}
%&\left(s_{\eta \eta} \rho^{\frac{4}{3}}+s_{\eta}(U^bU^a\nd_b\nd_a\eta)-\frac{U^bU^a\nd_b\nd_a \rho}{6\rho}+\frac{U^b\nd_b\rho U^a\nd_a\rho}{6\rho^2}\right)\si\\
%+&\left(s_{\eta}\rho^{\frac{2}{3}}-\frac{U^a\nd_a \rho}{6\rho}\right)\left(s_{\eta}(\rho^{\frac{2}{3}})-\frac{U^b\nd_b \rho}{6\rho}\right)\si+\P_{ab}U^aU^b\si+\La \rho\\
%=&\left(s_{\eta \eta} \rho^{\frac{4}{3}}+s_{\eta}s_{\eta}\rho^{\frac{4}{3}}+s_{\eta}(U^bU^a\nd_b\nd_a\eta-2\frac{U^b\nd_b \rho}{6\rho}\rho^{\frac{2}{3}})\right) \sigma\\
%&+\left(-\frac{U^bU^a\nd_b\nd_a \rho}{6\rho}+\frac{7U^b\nd_b\rho U^a\nd_a\rho}{36\rho^2}\right)\si+\P_{ab}U^aU^b\si+\La \rho\\
%=&0,
%\end{align*}
%and let 
%\begin{align*}
%\tilde R=\P_{ab}U^aU^b-\frac{U^aU^b\nd_b\nd_a \rho}{6\rho}+\frac{7U^b\nd_b\rho U^a\nd_a\rho}{36\rho^2}. 
%\end{align*}
We find that under a conformal rescaling, $\widehat {\tilde R}={\tilde R}+\frac{1}{2}\Up_c\Up^c\rho-\frac{\Up^c\nd_c\rho}{6}$.
%Then we obtain
%\begin{align*}
%\left(s_{\eta \eta} \rho^{\frac{4}{3}}+s_{\eta}s_{\eta}\rho^{\frac{4}{3}}+s_{\eta}(U^bU^a\nd_b\nd_a\eta-2\frac{U^b\nd_b \rho}{6\rho}\rho^{\frac{2}{3}})\right) \sigma+\tilde R\si+\La \rho=0.
%\end{align*}
A computation shows that 
\begin{align*}
U^c\nd_cU^a=\frac{\nd^a\rho}{2}-2Y^a\mu, 
\end{align*}
so that from 
\begin{align*}
(U^c\nd_cU^a)\nd_a\eta+U^cU^a\nd_c\nd_a\eta=\frac{2}{3}\rho^{-\frac{1}{3}}U^c\nd_c\rho
\end{align*}
we have
\begin{align*}
U^cU^a\nd_c\nd_a\eta=\frac{2}{3}\rho^{-\frac{1}{3}}U^c\nd_c\rho-\frac{1}{2}\rho^{-\frac{1}{3}}U^a\nd_a\rho=\frac{1}{6}\rho^{-\frac{1}{3}}U^a\nd_a\rho, 
\end{align*}
and therefore
\begin{align*}
\left(s_{\eta \eta} \rho^{\frac{4}{3}}+s_{\eta}s_{\eta}\rho^{\frac{4}{3}}-s_{\eta}\frac{U^b\nd_b \rho}{6\rho}\rho^{\frac{2}{3}}\right) \sigma+\tilde R\si+\La \rho=0.
\end{align*}
But
\begin{align*}
\frac{U^c\nd_c\rho}{6\rho}=\frac{f}{3}+\phi,%=\tilde \phi-\frac{2f}{3},
\end{align*}
so that we have
\begin{align*}
\left(s_{\eta \eta} \rho^{\frac{4}{3}}+s_{\eta}s_{\eta}\rho^{\frac{4}{3}}-s_{\eta}\left(\phi+\frac{f}{3}\right)\rho^{\frac{2}{3}}\right) \sigma+\tilde R\si+\La \rho=0.
\end{align*}
Substituting $\La \rho=s_{\eta}\rho^{\frac{2}{3}}\left(\phi+f\right) \si-\tilde Q \sigma$, we obtain 
\begin{align*}
\left(s_{\eta \eta} \rho^{\frac{4}{3}}+s_{\eta}s_{\eta}\rho^{\frac{4}{3}}+s_{\eta}\frac{2f}{3}\rho^{\frac{2}{3}}\right) \sigma+(\tilde R-\tilde Q)\sigma=0.
\end{align*}
Dividing throughout by $\si \rho^{\frac{4}{3}}$ gives the second order differential equation
\begin{align*}
s_{\eta \eta}+s_{\eta}s_{\eta}+s_{\eta}\frac{2f}{3}\rho^{-\frac{2}{3}}+(\tilde R-\tilde Q)\rho^{-\frac{4}{3}}=0.
\end{align*}
The differential equation can be made linear by the substitution $s_{\eta}=\frac{\xi_\eta}{\xi}$, and we obtain
\begin{align}\label{sode}
\xi_{\eta \eta}+\frac{2f}{3}\rho^{-\frac{2}{3}}\xi_{\eta}+(\tilde R-\tilde Q)\rho^{-\frac{4}{3}}\xi=0.
\end{align}
This is a second order homogeneous ODE provided that the coefficient functions $\frac{2f}{3}\rho^{-\frac{2}{3}}$ and $(\tilde R-\tilde Q)\rho^{-\frac{4}{3}}$ are functions of $\eta$ only. (We remark that the coefficients are invariant under a conformal rescaling). 
To check that the coefficients are functions of $\eta$, we have to check whether both equations
\begin{align}\label{constode1}
Y^a\nd_a\left(\frac{2f}{3}\rho^{-\frac{2}{3}}\right)&=0, \\ \label{constode2}
Y^a\nd_a\left((\tilde R-\tilde Q)\rho^{-\frac{4}{3}}\right)&=0,
\end{align}
hold, i.e.\ the derivative of the coefficients in the direction orthogonal to $\eta$ is zero. Hence provided that (\ref{constode1}) and (\ref{constode2}) both hold in the non-generic case, equation (\ref{ce0}) reduces to a second order linear homogeneous ODE and it will admit $2$ linearly independent solutions. 
\end{proof}
\begin{example}
An example where (\ref{ce0}) is satisfied on non-flat non-generic M\"obius surfaces is the Euclidean plane $\R^2$ (with $K=0$) with the M\"obius structure on $\R^2$ given by 
\begin{align*}
\P=&\P_{11}dxdx+2\P_{12}dxdy+\P_{22}dydy\\
=&\left(-a-2a^2x^2\right)dxdx+\left(2a^2x^2+a\right)dydy,
\end{align*}
where $a$ is a constant. 
For this M\"obius structure, we obtain
\begin{align*}
Y=&Y_1dx+Y_2dy=8a^2xdy,&\phi=&4a^2,& \rho=&64a^4x^2,\\
V=&V_1dx+V_2dy=-32a^4xdy,&\mu=&0,
\end{align*}
and we find that $U_aV^a=0$, $k=0$ (and so $k+f\mu=0$). Furthermore, we find that 
$Y^a\nd_a(\frac{2f}{3}\rho^{-\frac{2}{3}})=0$ and $Y^a\nd_a((\tilde R-\tilde Q)\rho^{-\frac{4}{3}})=0$ (since $\rho$, $f$, $\tilde R$ and $\tilde Q$ depend only on $x$). In this example, equation (\ref{ce0}) reduces to a second order ODE and the M\"obius surface admits two linearly independent solution to (\ref{ce0}). One is given by $\si=e^{ax^2}$, $\mu_a=2axe^{ax^2}dx$, $\La=-(a+2a^2x^2)e^{ax^2}$ and the other is given by $\si=\erf(\sqrt{2a}x)e^{ax^2}$, $\mu_a=\left(2ax\erf(\sqrt{2a}x)e^{ax^2}+2\sqrt{\frac{2a}{\pi}}e^{-ax^2}\right)dx$, $\La=-(a+2a^2x^2)\erf(\sqrt{2a}x)e^{ax^2}$, where $\erf(x)$ is the Gaussian Error function defined by
\[
\erf(x)=\frac{2}{\sqrt{\pi}}\int^x_0 e^{-t^2}dt.
\]
\end{example}
\begin{example}
Another example is the Euclidean plane $\R^2$ (with $K=0$) with the M\"obius structure on $\R^2$ given by 
\begin{align*}
\P=\P_{11}dxdx+2\P_{12}dxdy+\P_{22}dydy=-\frac{x}{2} dxdx+\frac{x}{2} dydy.
\end{align*}
For this M\"obius structure, we obtain
\begin{align*}
Y=&Y_1dx+Y_2dy=dy,&\phi=&0,& \rho=1,\\
V=&V_1dx+V_2dy=0,&\mu=&0,& f=0,\\
U=&U_1dx+U_2dy=dx=d\eta, &\tilde Q=\P_{22}=&\frac{x}{2},  & \tilde R=\P_{11}=-\frac{x}{2}. 
\end{align*}
Equation (\ref{sode}) reduces to 
\begin{align*}
\xi_{x x}-x \xi=0,
\end{align*}
and the solutions to this second order ODE are given by the Airy functions of the first and second kind, denoted $Ai(x)$ and $Bi(x)$. 
\end{example}
\section{Dimension of the kernel of conformal-to-Einstein operator}\label{dimker}
Here following David Calderbank's suggestion, we find the dimension of the kernel of conformal-to-Einstein operator on M\"obius surfaces. The maximal dimension of $4$ is obtained in the flat setting, as we shall show in Subsection \ref{flatsol}. In Subsection \ref{generalsol} we obtain partial results on the dimension of the kernel in the non-flat setting. We show that the dimension of $1$ is attained in the generic case and the dimension of $2$ is attained in the non-generic second order ODE case. 
\subsection{The flat case}\label{flatsol}

In the flat setting, on $\R^2$ equipped with the flat metric $\de_{ab}$ and standard coordinates $x^a=(x^1,x^2)$ with the Rho tensor associated to the M\"obius structure given by $\P_{ab}=0$ (since the Gauss curvature $K=0$), the tractor connection reduces to 
\begin{align*}
\partial_a\I_B=\partial_a\begin{pmatrix}
\si \\ \mu_b \\ \La
\end{pmatrix}=&\begin{pmatrix}
\partial_a\si-\mu_a \\ \partial_a\mu_b+\La \de_{ab} \\ \partial_a\La
\end{pmatrix}=\begin{pmatrix}
0 \\ 0 \\ 0 
\end{pmatrix}.
\end{align*}
We find that $\La=C$ where $C$ is some constant, and $\mu_b=-Cx_b+B_b$ where $B_b$ is some constant 1-form. Then $\si=-\frac{C}{2}x_bx^b+B_bx^b+A$, and we find that the solutions are determined by $4$ constants and so the dimension of the solution space of the conformal-to-Einstein operator is $4$. The parallel sections of the tractor bundle on $\R^2$ are therefore given by
\begin{align*}
\I_A=
\begin{pmatrix}
-\frac{C}{2}x_bx^b+B_bx^b+A\\ -Cx_a+B_a \\ C
\end{pmatrix}
\end{align*}
which is unique up to some constant multiple. 
\subsection{The non-flat case}\label{generalsol}
For the non-flat case, in the generic setting we have $\dim \ker \cD=1$ from Proposition \ref{ceeab}. In the non-generic setting it is also seen that provided the two constraints (\ref{constode1}) and (\ref{constode2}) hold, equation (\ref{ce0}) reduces to a second order homogeneous linear ODE and in this setting $\dim \ker \cD=2$. The remaining cases where either of the two constraints (\ref{constode1}) or (\ref{constode2})  does not hold will require a lot more involved and intensive computation to investigate but we conjecture in that in this case, if any examples exist, the dimension of the kernel is $1$. We do not have any examples of the non-generic setting where any of the constraints (\ref{constode1}) or (\ref{constode2}) does not hold and we do not know whether the class of such solutions is empty or not.     
We therefore have
\begin{proposition}
Let $(M^2,[g],[\P])$ be a M\"obius surface, and $\cD_{ab}:\cE[1]\mapsto \cE_{(ab)_\circ}[1]$ be the conformal-to-Einstein operator given by (\ref{ceop}). Then $\dim \ker \cD=4$ if the M\"obius structure is flat and otherwise $\dim \ker \cD=1$ if the M\"obius structure is non-flat and generic, or $\dim ker \cD=2$ if the conformal-to-Einstein equation reduces to a second order ODE (\ref{sode}) in the non-flat and non-generic case (this is under the assumptions that both (\ref{constode1}) and (\ref{constode2}) hold). 
\end{proposition}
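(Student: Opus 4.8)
The plan is to prove the statement by a case analysis over the three mutually exclusive regimes into which a M\"obius surface falls --- flat, non-flat generic, and non-flat non-generic --- reading off the dimension of $\ker\cD$ in each regime from the analysis already carried out above, and then verifying that the solutions produced are genuinely independent.

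First I would treat the flat case. Since flatness means $Y_a=0$, which by \cite{mob} is equivalent to integrability of the M\"obius structure, I may pass to a local flat scale in which $\P_{ab}=0$, reducing to the situation of Subsection \ref{flatsol}. There the prolonged system $\partial_a\I_B=0$ integrates explicitly: $\La=C$ is constant, $\mu_a=-Cx_a+B_a$, and $\si=-\tfrac{C}{2}x_bx^b+B_bx^b+A$. Thus every element of $\ker\cD$ is parametrised by the four constants $(A,B_1,B_2,C)$, and these are genuinely independent because the functions $1,x^1,x^2,x_bx^b$ are linearly independent, so no nontrivial choice yields $\si\equiv0$. Hence $\dim\ker\cD=4$.

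Next I would invoke Proposition \ref{ceeab} for the non-flat generic case $U_aV^a\neq0$. The decisive feature there is that $\mu_a$ is \emph{algebraically} determined by $\si$ through $\mu_a=K_a\si$, with $K_a$ of (\ref{solk}) built solely from the M\"obius data. Hence any $\si\in\ker\cD$ satisfies the first-order system $\nd_a\log\si=K_a$, whose solution is unique up to a multiplicative constant whenever it exists; the integrability condition is exactly $\nd_{[a}K_{b]}=0$, equivalently the vanishing of the obstruction $E_{ab}$ of (\ref{eab}). Therefore $\dim\ker\cD\le1$ throughout this regime, with equality attained precisely when $E_{ab}=0$, which is the converse half of Proposition \ref{ceeab}. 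Finally, in the non-flat non-generic case under the constraints (\ref{constode1}) and (\ref{constode2}), Proposition \ref{nongenericde} reduces (\ref{ce0}) to the second-order linear homogeneous ODE (\ref{sode}) in the coordinate $\eta$, via $\si=\rho^{-\frac16}\xi(\eta)$; such an ODE has a two-dimensional solution space, and since $\rho^{-\frac16}\neq0$ the assignment $\xi\mapsto\si=\rho^{-\frac16}\xi$ is linear and injective, carrying independent solutions $\xi_1,\xi_2$ to independent densities. The derivation preceding Proposition \ref{nongenericde} shows conversely that every $\si\in\ker\cD$ has this form, so $\dim\ker\cD=2$.

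The step demanding genuine care --- the expected main obstacle --- is confirming in the non-generic case that the \emph{scalar} reduction captures the full \emph{tensorial} equation (\ref{ce}). Here I would use that $Y_a\neq0$ and $U_a=\epsilon_{ac}Y^c$ frame $TM$ as an orthogonal pair, so that the symmetric trace-free equation (\ref{ce}) is equivalent to its three contractions against $Y^aY^b$, $Y^aU^b$ and $U^aU^b$; one of these fixes $\La$ (via the $Y^aY^b$-contraction), the $U^aU^b$-contraction yields (\ref{sode}), and the $Y^aU^b$-contraction is forced by $k+f\mu=0$. Once this equivalence is secured --- and the analogous bijections with parallel tractor sections are in place from Proposition \ref{meflat} and its generalisation --- the count $4,1,2$ follows at once from the three cited results.
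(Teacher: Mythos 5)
Your proof follows essentially the same route as the paper: the flat case by explicitly integrating the prolonged system in a flat scale (Subsection \ref{flatsol}), the generic case by citing Proposition \ref{ceeab} (where $\mu_a=K_a\si$ forces $\dim\ker\cD\le 1$, with equality exactly when $E_{ab}=0$), and the non-generic case by the ODE reduction of Proposition \ref{nongenericde}. The extra checks you supply --- passing to the flat model via integrability, linear independence of the four flat solutions, injectivity of $\xi\mapsto\rho^{-\frac{1}{6}}\xi(\eta)$, and the $\{Y^a,U^a\}$-frame argument that the three contractions capture all of (\ref{ce}) --- merely make explicit what the paper leaves implicit.
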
 

\section{Outlook}\label{outlook}
A future research direction is to look at almost M\"obius-Einstein scales in the same vein as \cite{AlmObst} and \cite{Gal}. Let $(M^2,[g],[\P])$ be a M\"obius surface equipped with a non-zero parallel section $\I_A$ (with respect to the prolongation connection) of the standard tractor bundle. The scale singularity of $\si$ given by $\Si=\{p\in M^2|\si(p)=0\}$ can either be codimension $1$, which is a curve on the Riemann surface, or codimension $2$, in which case $\Si$ is possibly a collection of isolated singularities (we refer to the example where $\si$ is given by Airy functions). Such M\"obius surfaces are conformally Einstein away from the scale singularity set $\Si$. Another possible direction is to look at a conformally compact 3-manifold $M^3$ with the boundary at infinity equipped with a section of the standard tractor bundle parallel with respect to the prolongation connection, and investigate the extent that the geometry of the interior is determined by the boundary. This is closely related to the AdS/CFT correspondence in physics. 

\section{Acknowledgements}
The author will like to acknowledge the referees for comments and improvements of the article.

\end{document}